\newtheorem{theorem}{Theorem}[section]
\newtheorem{corollary}[theorem]{Corollary}
\theoremstyle{definition}
\newtheorem{example}[theorem]{Example}
\newtheorem{problem}[theorem]{Problem}
\theoremstyle{remark}
\newcommand{\comp}{\mathrm{comp}}
\numberwithin{equation}{section}
\newcommand{\C}{\mathbb{C}}
\begin{document}

\title[On initial boundary value problem]{On initial boundary value problem for parabolic differential operator 
with non-coercive boundary conditions}

\author[A. Polkovnikov]{Alexander Polkovnikov}

\address{Siberian Federal University,
	Institute of Mathematics and Computer Science,
	pr. Svobodnyi 79,
	660041 Krasnoyarsk,
	Russia}

\email{paskaattt@yandex.ru}

%\thanks will become a 1st page footnote.
\thanks{The work was supported by the Foundation for the Advancement of Theoretical Physics and Mathematics "BASIS"}

\date{June 16, 2020}

%\dedicatory{\it This paper is dedicated to ?.}

%    General info
\subjclass [2010] {35K15}

\keywords{non-coercive problem, parabolic problem, Faedo-Galerkin method}

\begin{abstract}
We consider initial boundary value problem for uniformly 2-parabolic differential operator of 
second order in cylinder domain in ${\mathbb R}^n $ with non-coercive boundary conditions. In 
this case there is a loss of smoothness of the solution in Sobolev type spaces 
compared with the coercive situation.
Using by Faedo-Galerkin method we prove that problem has unique solution in special Bochner space.
\end{abstract}

\maketitle

%\tableofcontents

%\section*{Introduction}
%\label{s.I}

Initial boundary value problems for parabolic (by Petrovsky) differential operators with 
coercive boundary conditions are well studied (see, for instance, 
\cite{LadSoUr67}, \cite{LiMa72}, 
\cite{Mikh76}, \cite{Temam79}). However the problem with non-coercive 
boubdary conditions are also appeared in both theory and applications, see, for instance, 
pioneer work in this direction  \cite{ADN59}  and 
papers \cite{Ca59}, \cite{Ca60} and \cite{ShlPeich15} for such problems in the Elasticity Theory. 
Recent results in Fredholm operator equations, induced by boundary value problems for elliptic differential operators with non-coercive boundary conditions (see, for instance, \cite{PolkShla13}, \cite{PolkShla15}, \cite{PolkShla17}, \cite{ShlTark12}) allows us to apply these one for studying the parabolic problem. Consideration of such problems essentially extends variety of boundary operators, but there is a loss of regularity of the solution (see \cite{Kohn79} for elliptic case).  Namely, let $\Omega_T$ be a cylinder,
\[
\Omega_T = \Omega \times (0, T),
\]
where $\Omega$ is a bounded domain in ${\mathbb R}^n $.

Consider a second order differential operator 
$$
A  (x, t, \partial) 
= - \sum_{i, j = 1}^{n} \partial_i (a_{i,j} (x) \partial _j \cdot)
+ \sum_{j = 1}^{n} a_j (x) \partial_l
+ a_0 (x) + \frac{\partial}{\partial t}
$$
of divergence form in the domain $\Omega_T$. The coefficients $a_{i,j}$, $a_j$ are assumed to be complex-valued functions of class $L^\infty (\Omega)$.
We suppose that the matrix
$ {\mathfrak A} (x) =  \left( a_{i,j} (x) \right)_{\substack{i = 1, \ldots, n \\
		j = 1, \ldots, n}}$
is Hermitian and satisfies
\begin{equation}
\label{eq.ell.positive}
\sum_{i,j=1}^{n} a_{i,j} (x) \overline{w}_i w_j
\geq 0 \mbox{ for all } (x, w) \in \overline{\Omega} \times \C^n, 
\end{equation}
\begin{equation}
\label{eq.ell}
\sum_{i,j=1}^{n} a_{i,j} (x) \xi_i \xi_j
\geq
m\, |\xi|^2 \mbox{ for all } (x,\xi) \in \overline{\Omega} \times ({\mathbb R}^n \setminus \{ 0 \}),
\end{equation}
where $m$ is a positive constant independent of $x$ and $\xi$.
Estimate (\ref{eq.ell}) is nothing but the statement that the operator $A (x, t, \partial)$ 
is uniformly 2-parabolic.

We note that,
since the coefficients of the operator and the functions under consideration are
complex-valued,
inequalities (\ref{eq.ell.positive}) and (\ref{eq.ell}) are weaker than
\begin{equation}
\label{eq.coercive.strong}
\sum_{i,j=1}^n a_{i,j} (x)\, \overline{w}_i w_j
\geq
m\, |w|^2
\end{equation}
for all $(x,w) \in \overline{\Omega} \times ({\mathbb C}^n \setminus \{ 0 \})$.
%For each fixed $t\in [0,T]$ 
Inequality (\ref{eq.coercive.strong}) means that correspondent Hermitian form (see form (\ref{herm.form})) is coercive.

Consider boundary operator of Robin type: 
$$
B(x,\partial) = b_1 (x) \sum_{i,j=1}^{n} a_{i,j} (x)\, \nu_i \partial_j + b_0 (x),
$$
where $b_0$, $b_1$ are bounded functions  on $\partial \Omega$ and 
$\nu (x) = (\nu_1 (x), \ldots, \nu_{n} (x))$
is the unit outward normal vector of $\partial \Omega$ at $x \in \partial \Omega$. Let $ S $ be an open connected subset of $\partial \Omega$ with piecewise smooth boundary $\partial S$.
We allow the function $b_1 (x)$ to vanish on  $S$.
In this case we assume that $b_0 (x)$ does not vanish for $x\in S$.

Consider now the following mixed initial-boundary problem in a bounded 
domain $\Omega_T$ with Lipschitz boundary $\partial \Omega_T$. 

\begin{problem} \label{pr.S-D}
	Find a distribution $u(x,t) $, satisfying the problem
	$$
	\left\{ 
	\begin{array}{ccccc}
	A  (x, t, \partial) u& = & f & in & \Omega_T ,\\
	B(x,\partial)u & = & 0 & on & \partial \Omega \times ( 0, T), \\
	u(x,0) & = & u_0 & on & \Omega. 
	\end{array}
	\right.
	$$
	with given data $f\in \Omega_T$.
\end{problem}

For solving the problem we have to define appropriate functional spaces. Denote by $C^1 (\overline \Omega,S)$ the subspace of $C^1 (\overline \Omega)$ 
consisting of those functions whose restriction to the boundary vanishes on $\overline S$. Let 
$H^1 (\Omega,S)$ be the closure of $C^1 (\overline \Omega,S)$ in $H^1 (\Omega)$.
Since on $S$ the boundary operator reduces to $B = b_0 (x)$ and $
b_0 (x) \neq 0$ for $x \in S$, then the functions $u\in H^1 (\Omega)$ satisfying $Bu = 0$ on $\partial 
\Omega$ belong to $H^1 (\Omega,S)$. 

Split now both $a_0(x)$ and $b_0(x)$ into two parts
\[
a_0 = a_{0,0} + \delta a_0,
\]
\[
b_0 = b_{0,0} + \delta b_0,
\]
where $a_{0,0}$ is a non-negative bounded function in $\Omega$ and $b_{0,0}$ is a
such function that $b_{0,0}/b_1$ is non-negative bounded function on $S$.
Then, under reasonable assumptions, the Hermitian form
\begin{equation}\label{herm.form}
(u,v)_{+}
=  \int\limits_{\Omega} \sum_{i,j=1}^n a_{i,j} \partial_j u \overline{\partial_i v}\, dx
+ (a_{0,0} u, v)_{L^2 (\Omega)} + (b_{0,0}/b_1\, u, v)_{L^2 (\partial \Omega \setminus S)}
\end{equation}
defines the scalar product on $H^{1}(\Omega,S)$. Denote by $H^{+} (\Omega)$ the completion of the space $H^1 (\Omega,S)$
with respect to the corresponding norm $\|\cdot\|_+$. From now on we assume that the space $H^+ (\Omega)$ is continuously embedded into the Lebesgue
space $ L^2 (\Omega) $, i.e. there is a constant $c>0$,  independent of $u$, such that 
\[
\| u \|_{L^2 (\Omega)} \leq c\| u \|_{+}\ \mbox{for all}\ u\in H^{+} (\Omega).
\]
It is true, if there exist a positive constant $c_1$ such that 
\[
a_{0,0} \geq c_1 \mbox{ in } \Omega.
\]
Actually we can get more subtle embedding for the space $H^+ (\Omega)$.
\begin{theorem}
	\label{t.emb.half}
	Let the coefficients $a_{i,j}$ be $C^\infty$ in a neighbourhood of the closure of $\Omega$, 
	inequalities (\ref{eq.ell.positive}), (\ref{eq.ell}) hold and
	\begin{equation}
	\label{eq.b}
	\frac{b_{0,0}}{b_1} \geq c_2 \mbox{ at } \partial \Omega \setminus S,
	\end{equation}
	with some constant $c_2 > 0$.
	Then the space $H^+ (\Omega)$ is continuously embedded into $H^{1/2-\varepsilon} (\Omega)$
	for any $\varepsilon  > 0$ if there is a positive constant $c_1$, such that
	\begin{equation}
	\label{eq.a}
	a_{0,0} \geq c_1 \mbox{ in } \Omega
	\end{equation}
	or  the operator $A$ is strongly elliptic in a neighborhood $X$ of $\overline \Omega$
	and
	\begin{equation}
	\label{eq.aa}
	\int_{X} \sum_{i,j=1}^n a_{i,j} \partial_j u \overline{\partial_i u}\, dx
	\geq
	m\, \| u \|^2_{L^2 (X)}
	\end{equation}
	for all $u \in C^\infty_{\comp} (X)$, with $m > 0$ a constant independent of $u$.
\end{theorem}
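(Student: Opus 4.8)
The plan is to prove Theorem~\ref{t.emb.half} by first establishing the a priori estimate
\[
\| u \|_{H^{1/2 - \varepsilon} (\Omega)} \leq C\, \| u \|_{+} , \qquad u \in C^1 (\overline{\Omega}, S) ,
\]
with $C$ independent of $u$, and then passing to the completion $H^+ (\Omega)$ by density, since $C^1 (\overline{\Omega}, S)$ is dense in $H^1 (\Omega, S)$ and hence in $H^+ (\Omega)$. Write $Q (x, w) = \sum_{i,j=1}^n a_{i,j} (x)\, w_j \overline{w_i}$ for the Hermitian form attached to ${\mathfrak A} (x)$, so that the leading part of $\| u \|_+^2$ is $\int_\Omega Q (x, \partial u)\, dx$; by (\ref{eq.ell.positive}) this is nonnegative, and by (\ref{eq.ell}) it is bounded below by $m |\xi|^2$ only along real directions $\xi$. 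The preliminary step is the base embedding $\| u \|_{L^2 (\Omega)} \leq c\, \| u \|_+$: under (\ref{eq.a}) it is immediate from $\| u \|_+^2 \geq (a_{0,0} u, u)_{L^2 (\Omega)} \geq c_1 \| u \|_{L^2 (\Omega)}^2$, while under the alternative hypothesis it follows from the coercivity (\ref{eq.aa}) on the enlarged domain $X$ on which $A$ is strongly elliptic, after extending $u$ off $\Omega$. Granting $H^+ (\Omega) \hookrightarrow L^2 (\Omega)$, everything reduces to gaining the $1/2 - \varepsilon$ derivatives.

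I would then take a finite partition of unity subordinate to a cover of $\overline{\Omega}$ by one interior patch, patches meeting $S$, and patches meeting $\partial\Omega \setminus S$, and localize $Q (x, \partial u)$. Splitting $u = \mathrm{Re}\, u + i\, \mathrm{Im}\, u$ and using that ${\mathfrak A}$ is Hermitian, one obtains
\[
\int_\Omega Q (x, \partial u)\, dx = \int_\Omega \sum_{i,j} a_{i,j} \big( \partial_j \mathrm{Re}\, u\, \partial_i \mathrm{Re}\, u + \partial_j \mathrm{Im}\, u\, \partial_i \mathrm{Im}\, u \big)\, dx - 2 \int_\Omega \sum_{i,j} \mathrm{Im} (a_{i,j})\, \partial_j \mathrm{Im}\, u\, \partial_i \mathrm{Re}\, u\, dx ,
\]
where the first integral is $\geq m ( \| \nabla \mathrm{Re}\, u \|^2 + \| \nabla \mathrm{Im}\, u \|^2 )$ by (\ref{eq.ell}). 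Integrating the cross term by parts, which is legitimate because the $a_{i,j}$ are smooth, and using that $\mathrm{Im} (a_{i,j})$ is antisymmetric in $(i,j)$ while $\partial_i \partial_j$ is symmetric, its second-order contribution cancels, leaving a lower-order volume term absorbable into $\varepsilon \| \nabla u \|^2 + C_\varepsilon \| u \|_{L^2}^2$ together with the boundary term $\int_{\partial\Omega} \sum_{i,j} \mathrm{Im} (a_{i,j})\, \nu_i\, \partial_j \mathrm{Im}\, u\, \mathrm{Re}\, u\, ds$. On the interior patches and on those meeting $S$ this boundary term is absent, the first because the cutoff has compact support and the second because $u$ vanishes on $\overline{S}$; there the estimate closes to the full $H^1$ bound (equivalently, G\aa rding's inequality applies, the components $\widehat{\partial_j u} = i \xi_j \hat u$ differing only by the real factors $\xi_j$, so that $Q$ is tested along the real direction $\xi$). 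Hence all the loss is concentrated in the single boundary integral over $\partial\Omega \setminus S$, which carries one full derivative of $u$ against an undifferentiated trace and is the precise analytic source of the non-coerciveness.

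The hard part will be the estimate on a patch meeting $\partial\Omega \setminus S$, where $B$ reduces to the $A$-conormal derivative and the Shapiro--Lopatinskii (complementing) condition may fail. After flattening the boundary and applying the tangential Fourier transform, I would analyze the boundary symbol of the pair $(A, B)$ and show that the form controls $\| \langle \xi' \rangle^{1/2 - \varepsilon} \hat u \|^2$ in the tangential frequency $\xi'$, the failure of the complementing condition forcing the exponent $1/2 - \varepsilon$ in place of the coercive value $1$; the a priori estimates for elliptic operators with non-coercive boundary conditions established in \cite{PolkShla13}, \cite{PolkShla15}, \cite{ShlTark12} supply the required machinery. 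Combining this tangential gain with the control of the remaining derivatives furnished by the form, and summing over the partition of unity, then yields $\| u \|_{H^{1/2 - \varepsilon} (\Omega)}^2 \lesssim \int_\Omega Q (x, \partial u)\, dx + \| u \|_{L^2 (\Omega)}^2 \lesssim \| u \|_+^2$, which is the desired estimate.

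Finally, I would note that under the strong-ellipticity alternative the pointwise bound $Q (x, w) \geq m |w|^2$ holds on $\overline{\Omega}$, so that $\int_\Omega Q (x, \partial u)\, dx \geq m \| \nabla u \|_{L^2 (\Omega)}^2$ directly and the problematic boundary integral is never needed; this restores full $H^1$ control and yields the embedding a fortiori. The uniform conclusion valid in both cases is thus the continuous embedding $H^+ (\Omega) \hookrightarrow H^{1/2 - \varepsilon} (\Omega)$, the exponent $1/2 - \varepsilon$ reflecting the genuinely non-coercive situation governed by (\ref{eq.a}).
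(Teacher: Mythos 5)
The paper offers no proof of Theorem \ref{t.emb.half} at all: it simply cites \cite[Theorem 2.5]{ShlTark12}. Your proposal is therefore necessarily a different route, but as written it has two genuine gaps.

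First, the central analytic step is missing. Your reduction is sound as far as it goes: splitting $Q(x,\partial u)$ into the real quadratic part, which (\ref{eq.ell}) bounds below by $m\,|\nabla u|^2$ pointwise (only the symmetric, i.e.\ real, part of $a_{i,j}$ survives contraction against $\partial_j v\,\partial_i v$), plus the cross term $-2\int_\Omega \sum \mathrm{Im}(a_{i,j})\,\partial_j (\mathrm{Im}\, u)\, \partial_i(\mathrm{Re}\, u)\,dx$, whose second-order contribution cancels by antisymmetry after integration by parts, leaving a boundary integral concentrated on $\partial\Omega\setminus S$. This correctly isolates where coercivity fails. But the assertion that a flattening-plus-tangential-Fourier analysis of that boundary term shows the form controls $\|\langle\xi'\rangle^{1/2-\varepsilon}\hat u\|^2$ \emph{is} the theorem, and you give no argument for it; deferring it to the ``machinery'' of \cite{PolkShla13}, \cite{PolkShla15}, \cite{ShlTark12} at exactly this point makes the proposal circular relative to the result being proved. (The phenomenon is also not a failure of the Shapiro--Lopatinskii condition for the pair $(A,B)$: it is the degeneracy of the Hermitian form on complex covectors, as in the $\overline\partial$-Neumann problem, and the needed ingredient is a subelliptic estimate in the spirit of \cite{Kohn79}.)

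Second, your closing paragraph is wrong. Strong ellipticity of $A$ is a condition on \emph{real} covectors --- essentially (\ref{eq.ell}) again --- and does not imply the pointwise bound $Q(x,w)\geq m|w|^2$ for \emph{complex} $w$, which is the coercive inequality (\ref{eq.coercive.strong}) that the paper explicitly excludes. The paper's own final example, with $a_{1,1}=a_{2,2}=1$, $a_{1,2}=\sqrt{-1}=-a_{2,1}$ on the disk, is strongly elliptic yet the form annihilates the complex direction $(-\sqrt{-1},1)$, and that example falls under the second alternative of the theorem (there $a_{0,0}=0$) while $H^+$ embeds into no $H^s$ with $s>1/2$. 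So the second alternative cannot ``restore full $H^1$ control''; the hypotheses (\ref{eq.a}) and (\ref{eq.aa}) serve only to secure the base embedding $H^+(\Omega)\hookrightarrow L^2(\Omega)$ (and even there, extending $u$ off $\Omega$ so as to apply (\ref{eq.aa}) while keeping $\int Q$ under control needs justification for a non-coercive form). In both alternatives the loss down to $H^{1/2-\varepsilon}$ persists and must be handled by the subelliptic argument you have not supplied.
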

\begin{proof} See \cite[Theorem 2.5]{ShlTark12}.
\end{proof}

Of course, under coercive estimate (\ref{eq.coercive.strong}), the space 
$H^+ (\Omega)$ is continuously em\-bed\-ded into $H^{1} (\Omega)$. However, in general, the 
embedding, described in Theorem \ref{t.emb.half} is rather sharp (see 
\cite[Remark  5.1]{ShlTark12}).

The absence of coerciveness does not allows to consider 
arbitrary derivatives $\partial_j u$ for an element $u \in H^+ (\Omega)$. 
To cope with this difficulty we note that 
the matrix $ {\mathfrak A} (x) =  \left( a_{i,j} (x) \right)_{\substack{i = 1, \ldots, n \\
		j = 1, \ldots, n}}$
admits a factorisation, i.e. there is an $(m \times n)\,$-matrix ${\mathfrak D} (x)  = 
\left( {\mathfrak D}_{i,j} (x) \right)_{\substack{i = 1, \ldots, m \\
		j = 1, \ldots, n}}$ of bounded
functions in $\Omega$, such that
\begin{equation}
\label{eq.factor}
({\mathfrak D} (x))^\ast {\mathfrak D} (x)  =  {\mathfrak A} (x)
\end{equation}
for almost all $x \in D$ (see, for instance, \cite{ShlTark15}). For example, one 
could take the standard  non-negative self-adjoint square root  
${\mathfrak D} (x)= \sqrt{{\mathfrak A} (x)}$ of the matrix ${\mathfrak A} (x)$. 
Then 
$$
\sum_{i,j=1}^n a_{i,j} \partial_j u \overline{\partial_i v}\ 
=  ({\mathfrak D} \nabla v ) ^* {\mathfrak D}  \nabla u  = 
\sum_{l=1}^m \overline{{\mathfrak D}_l v}\, {\mathfrak D}_l u,
$$
for all smooth functions $u$ and $v$ in $\Omega$, where
$\nabla u$ is thought of as $n\,$-column with entries
$\partial_1 u, \ldots, \partial_n u$,
and $   {\mathfrak D}_l u:= \sum_{s=1}^n {\mathfrak D}_{l,s} (x) \partial_s u$, 
$l = 1, \ldots, m$.
From now on we may confine ourselves with first order summand of the form
$$  
\sum_{l=1}^m \tilde{a}_l (x) {\mathfrak D}_l,\quad \tilde{a}_l (x)\in L^\infty (\Omega),
$$
instead of
$$   
\sum_{j=1}^n a_{j} (x) \partial_j. 
$$ 
Since the coefficients $ \delta a_0 $, $\tilde{a}_l $ belong to $ L^\infty (\Omega) $
for all $ l=0,\dots,m $, it follows from %\ref{b_eval} and 
Cauchy inequality that
\begin{equation}\label{b_eval1}
\left| \Big(\big(\sum_{l=1}^m \tilde{a}_l (x) {\mathfrak D}_l  + \delta a_0\big)u ,v  \Big)_{L^2 (\Omega)} 	% + \left( b_1^{-1} \delta b_0\, u, v \right)_{L^2 (\partial D \setminus S )} 
\right| \leq c \, \|u\|_+ \, \|v\|_+.
\end{equation}

Let now $H^{-} (\Omega)$ stand for the dual space for the space $H^{+} (\Omega)$ 
with respect to the pairing $<\cdot,\cdot>$ 
induced by the scalar product $(\cdot,\cdot)_{L^2 (\Omega)}$, 
see \cite{LiMa72}, \cite{Sche60} and elsewhere. It is a Banach space 
with the norm 
$$
\| u \|_{-}
= \sup_{\substack{v \in H^{+}   (\Omega) \\ v \ne 0}}
\frac{|(v,u)_{L^2 (\Omega)}|}{\| v \|_{+}}.
$$
The space $ L^2 (\Omega) $ is continuously embedded into $ H^-(\Omega) $, if the space $ H^+(\Omega) $ is continuously embedded into $ L^2 (\Omega) $ (see \cite{PolkShla13}). We denote by $i':L^2 (\Omega) \to H^-(\Omega) $ and $i: H^+(\Omega) \to L^2 (\Omega)$ the operators of correspondent continuously embeddings.
Thus we have a triple of the functional spaces
\[
H^+(\Omega) \overset{i}{\hookrightarrow} L^2 (\Omega) \overset{i'}{\hookrightarrow} H^-(\Omega),
\]
where each embeddings is compact under the hypothesis of Theorem \ref{t.emb.half}. 

Denote by $L^2(0,T;H^{+}(\Omega))$ the Bochner space of $L^2$-functions
\[
u(t):[0,T] \to H^+(\Omega).
\]
%with integrable square.
It is a Banach space with the norm 
\[
\|u\|_{L^2(0,T;H^{+}(\Omega))}^2 = \int_0^T \|u(t)\|^2_+ dt.
\]

Then an integration by parts in $ \Omega $ leads to a weak formulation of Problem (\ref{pr.S-D}):
\begin{problem} \label{pr.weak}
	Given $f \in L^2(0,T;H^- (\Omega))$ and $u_0 \in L^2(\Omega)$, find $u \in L^2(0,T;H^+ (\Omega))$, such that
	\begin{equation}
	\label{eq.SL.w}
	\begin{split}
	%(u,v)_+  + \frac{d}{d t}\left( u,v\right)_{L^2(\Omega)} = <f,v>
	(u,v)_+ + \Big(\big(\sum_{l=1}^m \tilde{a}_l (x) {\mathfrak D}_l  + \delta a_0\big)u ,v  \Big)_{L^2 (\Omega)} 	 +  %\left( b_1^{-1} \delta b_0\, u, v \right)_{L^2 (\partial \Omega \setminus S)} + \\ + 
	\frac{\partial}{\partial t}\left( u,v\right)_{L^2(\Omega)} = <f,v>
	\end{split}
	\end{equation}
	for all $v \in H^+ (\Omega)$, and
	\begin{equation}
	\label{eq.SL.b}
	%u|_{t=0} = u_0.
	u(0) = u_0.
	\end{equation}
\end{problem}

In general case the condition (\ref{eq.SL.b}) have no sense for functions $ u \in L^2 (0,T; H^+ (\Omega))$. But we will see below that function $u(t) \in L^2(0,T;H^+ (\Omega))$, satisfying (\ref{eq.SL.w}), is continuous %for almost all $ t \in [0,T]$ 
and (\ref{eq.SL.b}) have a sense.

We want to apply the Faedo-Galerkin method for solving the Problem \ref{pr.weak} (see, for instance, \cite{LiMa72}, \cite{Temam79}). For this purpose we need some complete system of vectors in the space $H^+ (\Omega)$. As this system we take the set of eigenvectors of an operator, induced by the weak statement of elliptic selfadjoint problem, corresponding to the parabolic Problem \ref{pr.weak}. 
Namely, for given $f \in H^- (\Omega)$, find $u \in H^+ (\Omega)$, such that
\begin{equation}
\label{eq.SL.w11}
(u,v)_+ + \Big(\big(\sum_{l=1}^m \tilde{a}_l (x) {\mathfrak D}_l  + \delta a_0\big)u ,v  \Big)_{L^2 (\Omega)} = <f,v>.
%(u,v)_+ + \Big(\big(\sum_{j=1}^n a_j \partial_j  + \delta a_0\big)u ,v  \Big)_{L^2 (\Omega)} +  \left( b_1^{-1} \delta b_0\, u, v \right)_{L^2 (\partial \Omega \setminus S)} = <f,v>
\end{equation}

Equality (\ref{eq.SL.w11}) induces a bounded linear operator $L: H^{+} (\Omega) \to H^{-} (\Omega) $,
\begin{equation}
\label{eq.SL.w13}
(u,v)_+ + \Big(\big(\sum_{l=1}^m \tilde{a}_l (x) {\mathfrak D}_l  + \delta a_0\big)u ,v  \Big)_{L^2 (\Omega)} = <L u,v>.
%(u,v)_+ + \Big(\big(\sum_{j=1}^n a_j \partial_j  + \delta a_0\big)u ,v  \Big)_{L^2 (\Omega)} +  \left( b_1^{-1} \delta b_0\, u, v \right)_{L^2 (\partial \Omega \setminus S)} = <f,v>
\end{equation}
Denote by $L_0$ the operator $L$ in the case, when $\delta a_0 = a_l =0 $ for all $ l = 1,\dots, m $,
\begin{equation}
\label{eq.SL.w12}
(u,v)_+ = <L_0 u,v>.
%(u,v)_+ + \Big(\big(\sum_{j=1}^n a_j \partial_j  + \delta a_0\big)u ,v  \Big)_{L^2 (\Omega)} +  \left( b_1^{-1} \delta b_0\, u, v \right)_{L^2 (\partial \Omega \setminus S)} = <f,v>
\end{equation}
The operator $L_0: H^{+} (\Omega) \to H^{-} (\Omega)$ is 
continuously invertible and $\|L_0\|=\|L_0^ {-1}\|=1$ (see \cite[Lemma 2.6]{ShlTark12}). According to \cite[Lemma 3.1]{ShlTark12}, there is a system $\{h_j\}$ of eigenvectors of the compact positive selfadjoint operator $L_0^{-1}i'i : H^{+} (\Omega) \to H^{+} (\Omega) $, which is an orthonormal bases in $H^{+} (\Omega)$ and an orthogonal bases in $ L^2(\Omega) $ and $H^{-} (\Omega)$.

Let now function $u \in L^2(0,T;H^+ (\Omega))$ satisfies (\ref{eq.SL.w}).
We have from (\ref{eq.SL.w13}) 
\[
\left( \frac{\partial u}{\partial t},v\right)_{L^2(\Omega)} = < \frac{\partial u}{\partial t} ,v> = <f-L u,v>.
\]
Since $ f\in  L^2(0,T;H^- (\Omega))$ and operator $L: H^{+} (\Omega) \to H^{-} (\Omega) $ is bounded, then $ \frac{\partial u}{\partial t} \in L^2(0,T;H^- (\Omega))$. It means, that 
\begin{equation}\label{eq_nepr}
u \in C(0,T;L^2 (\Omega))
\end{equation}
%for almost all $ t \in [0,T]$ 
(see, for instance, \cite{LiMa72} or \cite{Gaevsky}).

Using by the standard Faedo-Galerkin method (see, for instance, \cite{LadSoUr67}), \cite{LiMa72}, \cite{Temam79} we get next Theorem.
\begin{theorem}\label{t.exist}
	Under the hypothesis of Theorem \ref{t.emb.half}, the Problem \ref{pr.weak} has at least one solution $u(t)$, and, moreover,
	$ u(t)\in C(0,T;L^2 (\Omega)) $.
	%for almost all $t\in\left[ 0,T\right] $.
\end{theorem}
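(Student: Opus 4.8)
The plan is to realise the Faedo--Galerkin scheme on the eigenbasis $\{h_j\}$, derive a uniform energy estimate, extract a weak limit and pass to the limit in the weak formulation. First I would seek approximate solutions of the form $u_N(t) = \sum_{j=1}^N g_j(t)\, h_j$ and fix the coefficients by testing (\ref{eq.SL.w}) against each $h_k$, $k=1,\dots,N$. Since $\{h_j\}$ is orthonormal in $H^+(\Omega)$ and orthogonal in $L^2(\Omega)$, the principal and mass terms diagonalise and the relations reduce to a linear first order system
\[
\lambda_k\, g_k'(t) + g_k(t) + \sum_{j=1}^N M_{kj}\, g_j(t) = \langle f(t), h_k\rangle, \qquad k=1,\dots,N,
\]
with $M_{kj} = \big((\sum_{l} \tilde a_l \mathfrak{D}_l + \delta a_0) h_j, h_k\big)_{L^2}$ and $\lambda_k = \|h_k\|_{L^2}^2 > 0$. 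As $f \in L^2(0,T;H^-(\Omega))$, the right hand sides lie in $L^2(0,T)$, so this system has a unique absolutely continuous solution on $[0,T]$ by the Carath\'eodory theory of linear ODEs, with initial data $g_k(0) = (u_0,h_k)_{L^2}/\lambda_k$; that is, $u_N(0)$ is the $L^2$-orthogonal projection of $u_0$ onto $\mathrm{span}\{h_1,\dots,h_N\}$.

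The decisive step is a uniform a priori estimate. Multiplying the $k$-th relation by $\overline{g_k(t)}$, summing over $k$ and taking the real part gives the energy identity
\[
\|u_N\|_+^2 + \mathrm{Re}\,\Big(\big(\sum_{l=1}^m \tilde a_l \mathfrak{D}_l + \delta a_0\big) u_N, u_N\Big)_{L^2} + \tfrac12 \tfrac{d}{dt}\|u_N\|_{L^2}^2 = \mathrm{Re}\,\langle f, u_N\rangle.
\]
Here I would not invoke (\ref{b_eval1}) directly, as it only bounds the middle term by $c\|u_N\|_+^2$, which cannot be absorbed when $c\ge 1$. Instead, using the factorisation (\ref{eq.factor}) I note that $\sum_l \|\mathfrak{D}_l u_N\|_{L^2}^2$ is precisely the principal part of $\|u_N\|_+^2$, so the first order term is controlled by $C\|u_N\|_+\,\|u_N\|_{L^2}$ and the zero order term by $\|\delta a_0\|_{L^\infty}\|u_N\|_{L^2}^2$. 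Young's inequality then absorbs a fraction of $\|u_N\|_+^2$ into the left hand side, leaving
\[
\tfrac12 \|u_N\|_+^2 + \tfrac12 \tfrac{d}{dt}\|u_N\|_{L^2}^2 \le C_1 \|u_N\|_{L^2}^2 + \|f\|_-^2 .
\]
Dropping the good term and applying Gronwall's lemma bounds $(u_N)$ in $L^\infty(0,T;L^2(\Omega))$; feeding this back and integrating over $(0,T)$ then bounds $(u_N)$ in $L^2(0,T;H^+(\Omega))$.

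With these bounds secured, since $L^2(0,T;H^+(\Omega))$ is a Hilbert space I would extract a subsequence $u_{N'}\rightharpoonup u$ weakly in $L^2(0,T;H^+(\Omega))$. To identify $u$, fix $k$ and $\varphi\in C^1([0,T])$ with $\varphi(T)=0$, multiply the $k$-th relation by $\varphi$, integrate over $(0,T)$ and integrate the time derivative term by parts to shift $\partial_t$ onto $\varphi$. Each resulting term is the action of a fixed bounded linear functional on $L^2(0,T;H^+(\Omega))$ --- for the first order contribution this uses the boundedness underlying (\ref{b_eval1}), for the mass contribution the embedding $H^+(\Omega)\hookrightarrow L^2(\Omega)$ --- so weak convergence lets me pass to the limit, while the initial term converges because $u_{N'}(0)\to u_0$ in $L^2(\Omega)$ by construction. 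Taking $\varphi\in C^\infty_{\comp}(0,T)$ and using the density of $\mathrm{span}\{h_j\}$ in $H^+(\Omega)$ shows that $u$ satisfies (\ref{eq.SL.w}) for every $v\in H^+(\Omega)$.

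It remains to make sense of and verify the initial condition. Exactly as in the derivation of (\ref{eq_nepr}), the equation just obtained gives $\partial_t u = f - L u \in L^2(0,T;H^-(\Omega))$, hence $u\in C(0,T;L^2(\Omega))$ and $u(0)$ is well defined; this is also the asserted regularity. Comparing the integration-by-parts identity for a test function $\varphi$ with $\varphi(0)\ne 0$ against the weak equation now satisfied by $u$ forces $(u(0),h_k)_{L^2}=(u_0,h_k)_{L^2}$ for all $k$, so that $u(0)=u_0$ and (\ref{eq.SL.b}) holds. The main obstacle throughout is the lack of coerciveness: it is what blocks the naive absorption via (\ref{b_eval1}) and forces the split of the lower order term into the factor $\|u_N\|_+\,\|u_N\|_{L^2}$, the one place where the structure of the $+$-norm, rather than mere boundedness of the form, is essential.
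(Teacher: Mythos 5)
Your proposal is correct and follows essentially the same route as the paper's own proof: the Galerkin system on the eigenbasis $\{h_j\}$ of $L_0^{-1}i'i$, the energy estimate in which the first-order term is bounded by $c\,\|u_k\|_{+}\,\|u_k\|_{L^2(\Omega)}$ (exactly the splitting used in (\ref{eq.apri.2})) rather than via (\ref{b_eval1}), Gronwall, weak compactness, passage to the limit against $h_j\psi(t)$ with $\psi(T)=0$, and recovery of the initial datum by comparing the two integration-by-parts identities. The only cosmetic difference is that the paper also extracts a weak-* limit in $L^\infty(0,T;L^2(\Omega))$ and identifies it with the weak limit in $L^2(0,T;H^+(\Omega))$, which your streamlined extraction makes unnecessary.
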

\begin{proof} 
	For each k we are looking for approximate solution of Problem \ref{pr.weak} on the next form
	\begin{equation}
	\label{eq.sol}
	u_k(t) = \sum_{j=1}^k g_{j k}(t)h_j,
	\end{equation} 
	and function $u_k$ satisfies
	\begin{equation}
	\label{eq.system}
	(u_k, h_i)_+ + \Big(\big(\sum_{l=1}^m \tilde{a}_l (x) {\mathfrak D}_l  + \delta a_0\big) u_k , h_i \Big)_{L^2 (\Omega)}  +
	\left( \frac{\partial u_k}{\partial t}, h_i\right)_{L^2(\Omega)} = <f, h_i>,
	\end{equation} 
	\begin{equation}
	%\label{eq.SL.w1}
	u_k(0) = \sum_{j=1}^k \frac{(u_0, h_j)_{L^2(\Omega)}}{\|h_j\|^2_{L^2(\Omega)}} h_j,
	\end{equation} 
	for each ass $j=1,\dots,k$, where $\{h_j\}$ is the orthonormal bases in $H^{+} (\Omega)$. It means that (\ref{eq.system}) takes the form
	\begin{equation}\label{eq.system.1}
	g_{i k}(t) + \sum_{j=1}^k  \Big(\big(\sum_{l=1}^m \tilde{a}_l (x) {\mathfrak D}_l  + \delta a_0\big) h_j , h_i \Big)_{L^2 (\Omega)}  %+ \left( b_1^{-1} \delta b_0\, h_j, h_i \right)_{L^2 (\partial \Omega \setminus S)} \right)  
	g_{j k}(t) + g'_{i k}(t) \| h_i\|^2_{L^2(\Omega)} = <f, h_i>,
	\end{equation}
	where $ i=1,\dots,k $. 
	It is a system of linear differential equations of first order with initial conditions
	\begin{equation}
	\label{eq.system.1.bound}
	g_{i k}(0) = \frac{(u_0, h_i)_{L^2(\Omega)}}{\|h_i\|^2_{L^2(\Omega)}},\quad i=1,\dots,k.
	\end{equation} 
	%Since $ \delta a_0 $ and $ a_l $ belong to $ L^\infty(\Omega) $ and (\ref{b_eval}) holds true, it follows from Cauchy inequality that
	Since $ <f(t), h_i> $ is measurable function for all $ i=1,\dots,k$, then there is unique function $g_{i k}(t)$ for each $i=1,\dots,k$, satisfying (\ref{eq.system.1}) and (\ref{eq.system.1.bound}) for all $t\in\left[0,T \right] $ (see, for instance, \cite{Fillipov}). Note, as the function $ u(t) $ is complex-valued, then the functions 
	$\{g_{i k}(t)\}$ may be complex-valued too and the system (\ref{eq.system.1}) consists $ 2k $ real-valued equations in general case.
	
	Now we have to get some priori estimates for function $u_k(t)$ independent of $k$.
	Multiplying the equality (\ref{eq.system}) by the $ \overline{g_{i k}(t)} $ and summing by $ i = 1,\dots, k $ we get
	\begin{equation}
	\label{eq.apri.1}
	\|u_k\|^2_+ +   %\left( b_1^{-1} \delta b_0\, u_k, u_k \right)_{L^2 (\partial \Omega \setminus S)} + 
	\left(\frac{\partial u_k}{\partial t} ,u_k\right)_{L^2(\Omega)} = <f, u_k> - \Big(\big(\sum_{l=1}^m \tilde{a}_l (x) {\mathfrak D}_l  + \delta a_0\big) u_k , u_k \Big)_{L^2 (\Omega)}.
	\end{equation}
	Hence, by the Cauchy inequality,
	\begin{equation}\label{eq.apri.2}
	2\left| \|u_k\|^2_+ +  %\left( b_1^{-1} \delta b_0\, u_k, u_k \right)_{L^2 (\partial \Omega \setminus S)} + \right. \left.
	\left(\frac{\partial u_k}{\partial t} ,u_k\right)_{L^2(\Omega)} \right|=
	\end{equation}  
	\[
	=2\Big| <f, u_k> - \Big(\sum_{l=1}^m \tilde{a}_l (x) {\mathfrak D}_l u_k , u_k \Big)_{L^2 (\Omega)} - (\delta a_0 u_k , u_k )_{L^2 (\Omega)} \Big| \leq 
	\]
	\[
	\leq \|f\|^2_- + \|u_k\|^2_+ + 2c_1\|u_k\|_+ \|u_k\|_{L^2 (\Omega)} + 2c_2 \|u_k\|^2_{L^2 (\Omega)}\leq 
	\]
	\[
	\leq \|f\|^2_- + \frac{3}{2}\|u_k\|^2_+  + (2c_2+2c_1^2) \|u_k\|^2_{L^2 (\Omega)}
	\]
	for some positive constants $ c_1 $ and $ c_2 $.
	As the norm $ \|u_k\|_+^2 $ is a real-valued function, we have
	\begin{equation}\label{eq.apri.3}
	2\left| \|u_k\|^2_+ +  \left(\frac{\partial u_k}{\partial t} ,u_k\right)_{L^2(\Omega)}\right|  = 
	\end{equation}
	\[
	=2\left| \|u_k\|^2_+ + \mathfrak{Re}\left(  \left(\frac{\partial u_k}{\partial t} ,u_k\right)_{L^2(\Omega)}\right) + i\mathfrak{Im}\left(  \left(\frac{\partial u_k}{\partial t} ,u_k\right)_{L^2(\Omega)} \right) \right|\geq 
	\]
	\[
	\geq 2\|u_k\|^2_+ + 2\mathfrak{Re}\left(  \left(\frac{\partial u_k}{\partial t} ,u_k\right)_{L^2(\Omega)} \right),
	\]
	where $ \mathfrak{Re}(g) $ and $ \mathfrak{Im}(g) $ denote real and imaginary parts of function $ g $ respectively.
	On the other hand, 
	\begin{equation}\label{eq.apri.12}
	\frac{d}{dt} \|u_k\|^2_{L^2(\Omega)} =  \left(\frac{\partial u_k}{\partial t} ,u_k\right)_{L^2(\Omega)} +  \left(u_k, \frac{\partial u_k}{\partial t}\right)_{L^2(\Omega)} =
	\end{equation}
	\[
	= 2\mathfrak{Re}\left( \left(\frac{\partial u_k}{\partial t} ,u_k\right)_{L^2(\Omega)}\right).
	\]
	It follows from (\ref{eq.apri.2}), (\ref{eq.apri.3}) and (\ref{eq.apri.12}) that
	\begin{equation}\label{eq.apri.4}
	\frac{1}{2}\|u_k(t)\|^2_+ + \frac{d}{dt} \|u_k(t)\|^2_{L^2(\Omega)} \leq \|f(t)\|^2_- + (2c_2+2c_1^2)\|u_k\|^2_{L^2 (\Omega)}.
	\end{equation}
	
	Now, integrating (\ref{eq.apri.4}) by $ t $ from $ 0 $ till some $ s \in(0,T) $ we get
	\[
	\frac{1}{2}\int_0^s \|u_k(t)\|^2_+ dt + \|u_k(s)\|^2_{L^2(\Omega)} - \|u_k(0)\|^2_{L^2(\Omega)} \leq 
	\]
	\[
	\leq\int_0^s \|f(t)\|^2_- dt + (2c_2+2c_1^2)\int_0^s \|u_k\|^2_{L^2 (\Omega)} dt.
	\]
	Since the sequence $\{ u_k(0) \} $ seeks to $ u_0 $  with $ k\to \infty $ strongly in $ L^2(\Omega) $, it follows from Gronwall type lemma (see \cite{Groe19} or \cite{MPF91}), that
	\[
	\|u_k(s)\|^2_{L^2(\Omega)} \leq \left( \|u_0\|^2_{L^2(\Omega)} + \int_0^T \|f(t)\|^2_- dt\right) e^{(2c_2+2c_1^2)s} .
	\]
	Hence
	\begin{equation}\label{eq.apri.5}
	\sup_{s\in [ 0,T]} \|u_k(s)\|^2_{L^2(\Omega)} \leq \left( \|u_0\|^2_{L^2(\Omega)} + \int_0^T \|f(t)\|^2_- dt\right) e^{(2c_2+2c_1^2)T}.
	\end{equation}
	The right side of (\ref{eq.apri.5}) independent of $ k $, therefore the sequence $ \{u_k(t)\} $ is bounded in $ L^\infty(0,T;L^2(\Omega))$. Then there is a subsequence  $ \{u_{k'}(t)\} $ of the sequence $ \{u_k(t)\} $ and an element $ u(t)\in L^\infty(0,T;L^2(\Omega)) $ such that $ u_{k'}(t) \to u(t) $ in the weak-* topology of $ L^\infty(0,T;L^2(\Omega)) $, namely
	\begin{equation}\label{eq.apri.7}
	\lim_{k'\to \infty} \int_0^T (u_{k'}(t) - u(t), v(t) )_{L^2(\Omega)} dt = 0
	\end{equation}
	for all $ v\in L^1(0,T;L^2(\Omega)) $.
	
	Integrating again (\ref{eq.apri.4}) by $ t $ from $ 0 $ till $ T $ and applying Gronwall type lemma we have
	\begin{equation}\label{eq.apri.6}
	\frac{1}{2}\int_0^T \|u_k(t)\|^2_+ dt + \|u_k(T)\|^2_{L^2(\Omega)}\leq 
	\end{equation}
	\[
	\leq \left( \|u_0\|^2_{L^2(\Omega)} + \int_0^T \|f(t)\|^2_- dt \right) e^{(2c_2+2c_1^2)T}.
	\]
	It means that the sequence $ \{u_k(t)\} $ is bounded in $ L^2(0,T;H^+(\Omega)) $.
	In particular, the sequence $ \{u_{k'}(t)\} $ is bounded in $ L^2(0,T;H^+(\Omega)) $ too. Hence there is a subsequence  $ \{u_{k''}(t)\} $ of the sequence $ \{u_{k'}(t)\} $ and an element $ \widetilde{u}(t)\in L^2(0,T;H^+(\Omega)) $ such that $ u_{k''}(t) \to u(t) $ in the weak topology of $ L^2(0,T;H^+(\Omega)) $,
	\begin{equation}\label{eq1}
	\lim_{k''\to \infty}\int_0^T (u_{k''}(t), v)_+ \,dt = \int_0^T (u(t), v)_+ \,dt
	\end{equation}
	for all $ v\in L^2(0,T;H^+(\Omega)) $ and
	\begin{equation}\label{eq.apri.8}
	\lim_{k''\to \infty} \int_0^T <u_{k''}(t) - \widetilde{u}(t), v(t)> dt = 0
	\end{equation}
	for all $ v\in L^2(0,T;H^-(\Omega)) $. In particular
	\begin{equation}\label{eq.apri.81}
	\lim_{k''\to \infty} \int_0^T (u_{k''}(t), v(t))_{L^2(\Omega)} dt = \int_0^T (\widetilde{u}(t), v(t))_{L^2(\Omega)} dt
	\end{equation}
	for all $ v\in L^2(0,T;L^2(\Omega)) $.
	
	From (\ref{eq.apri.7}) and (\ref{eq.apri.81}) we have
	\begin{equation}\label{eq.apri.9}
	\int_0^T (u(t) - \widetilde{u}(t), v(t))_{L^2(\Omega)} dt = 0
	\end{equation}
	for all $ v\in L^2(0,T;L^2(\Omega)) $. Hence 
	\begin{equation}\label{eq.apri.10}
	u(t) = \widetilde{u}(t) \in L^\infty(0,T;L^2(\Omega)) \cap L^2(0,T;H^+(\Omega)).
	\end{equation}
	From now on we denote by $ \{u_k(t)\} $ the subsequence $ \{u_{k''}(t)\} $.
	
	Let now $ \psi (t) $ be a scalar differentiable function on $ [0,T] $ such that
	$ \psi (T) = 0 $. Multiplying (\ref{eq.system}) by $ \psi (t) $ and integrating by $t$ we get
	\begin{equation}
	\begin{split}\label{eq.system.p1}
	\int_0^T (u_k(t), h_j)_+ \psi (t)\,dt +  \int_0^T \Big(\big(\sum_{l=1}^m \tilde{a}_l (x) {\mathfrak D}_l  + \delta a_0\big) u_k(t) , h_i \Big)_{L^2 (\Omega)}\psi (t) \,dt	 +\\ + 
	\int_0^T \left( \frac{\partial u_k(t)}{\partial t}, h_i\right)_{L^2(\Omega)} \psi (t)\,dt
	= \int_0^T<f(t), h_j> \psi (t) dt.
	\end{split}
	\end{equation}
	However
	\begin{equation}
	\begin{split}\label{eq.system.p2}
	\int_0^T \left( \frac{\partial u_k(t)}{\partial t}, h_i\right)_{L^2(\Omega)} \psi (t)\,dt =  -\int_0^T \left( u_k(t), \psi' (t) h_j\right)_{L^2(\Omega)}\, dt - \\ - ( u_k(0), h_j\psi (0))_{L^2(\Omega)},
	\end{split}
	\end{equation}
	and it follows that 
	\begin{equation}\label{eq.system.p}
	\int_0^T (u_k(t), h_j\psi (t))_+ \,dt  + \int_0^T \Big(\big(\sum_{l=1}^m \tilde{a}_l (x) {\mathfrak D}_l  + \delta a_0\big) u_k(t) , h_i \Big)_{L^2 (\Omega)}\psi (t) \,dt -
	\end{equation}
	\[
	- \int_0^T \left( u_k(t), \psi' (t) h_j\right)_{L^2(\Omega)}\, dt = ( u_k(0), h_j\psi (0))_{L^2(\Omega)} + \int_0^T<f(t), h_j> \psi (t) dt.
	\]
	Now we want to go to the limit in (\ref{eq.system.p}) with $ k \to \infty$. 
	It follows from \ref{b_eval1}, that
	\[
	\int_0^T \Big(\big(\sum_{l=1}^m \tilde{a}_l (x) {\mathfrak D}_l  + \delta a_0\big) u_k(t) , h_i \Big)_{L^2 (\Omega)}\psi (t) \,dt
	\]
	is continuous linear functional on $ L^2(0,T;H^+(\Omega)) $. Since $ u_{k}(t) \to u(t) $ with $ k\to \infty $ in the weak topology of $ L^2(0,T;H^+(\Omega)) $, we have
	\[
	\lim_{k\to \infty}\int_0^T \Big(\big(\sum_{l=1}^m \tilde{a}_l (x) {\mathfrak D}_l  + \delta a_0\big) (u_k(t) - u(t)) , h_i \Big)_{L^2 (\Omega)}\psi (t) \,dt  = 0.
	\]
	From (\ref{eq.apri.81}), (\ref{eq1}), (\ref{eq.apri.10}) and the fact that $ u_k(0) \to u_0 $ strongly in $ L^2(\Omega) $ with $ k \to \infty $ we get
	\begin{equation}\label{eq2}
	\int_0^T (u(t), h_j\psi (t))_+ \,dt + \int_0^T \Big(\big(\sum_{l=1}^m \tilde{a}_l (x) {\mathfrak D}_l  + \delta a_0\big) u(t) , h_i \psi (t)\Big)_{L^2 (\Omega)} \,dt -
	\end{equation}
	\[
	- \int_0^T \left( u(t), \psi' (t) h_j\right)_{L^2(\Omega)}\, dt = ( u_0, h_j\psi (0))_{L^2(\Omega)} + \int_0^T<f(t), h_j> \psi (t) dt.
	\]
	
	As the system $\{h_j\}_{j=1,2,\dots}$ is dense in $H^{+} (\Omega)$ and $ L^2(\Omega) $, equality (\ref{eq2}) holds by linearity and continuity for all $v\in H^{+} (\Omega)$,
	\begin{equation}\label{eq3}
	\int_0^T (u(t), v)_+ \psi (t) \,dt + \int_0^T \Big(\big(\sum_{l=1}^m \tilde{a}_l (x) {\mathfrak D}_l  + \delta a_0\big) u(t) , v \Big)_{L^2 (\Omega)}\psi (t) \,dt- 
	\end{equation}
	\[
	- \int_0^T \left( u(t), v\right)_{L^2(\Omega)} \psi' (t)\, dt = ( u_0, v)_{L^2(\Omega)} \psi (0) + \int_0^T<f(t), v> \psi (t) dt.
	\]
	In particular, if we take by $ \psi (t) $ differentiable functions with compact support in $ (0,T) $, we get 
	\begin{equation}\label{eq4}
	(u(t), v)_+ + \Big(\big(\sum_{l=1}^m \tilde{a}_l (x) {\mathfrak D}_l  + \delta a_0\big) u(t) , v \Big)_{L^2 (\Omega)} + \frac{d}{d t}\left(u(t), v\right)_{L^2(\Omega)} = <f(t),v>
	\end{equation}
	in the sense of distributions. Now we have to show that $ u(0) = u_0 $.
	Indeed, multiplying (\ref{eq4}) by $ \psi (t) $ and integrating by parts we get
	\[
	\int_0^T (u(t), v)_+ \psi (t) \,dt + \int_0^T \Big(\big(\sum_{l=1}^m \tilde{a}_l (x) {\mathfrak D}_l  + \delta a_0\big) u(t) , v \Big)_{L^2 (\Omega)}\psi (t) \,dt -
	\]
	\[
	- \int_0^T \left( u(t), v\right)_{L^2(\Omega)} \psi' (t)\, dt = ( u(0), v)_{L^2(\Omega)} \psi (0) + \int_0^T<f(t), v> \psi (t) dt.
	\]
	Comparing it with (\ref{eq3}) we get
	\[
	( u(0)-u_0, v)_{L^2(\Omega)} \psi (0) = 0
	\]
	for all $ v\in H^{+} (\Omega)$. Taking $ \psi (0)\neq 0 $ we receive $ u(0) = u_0 $.
	
	The continuity follows from (\ref{eq_nepr}). 
\end{proof}

\begin{corollary}
	Under the hypothesis of Theorem \ref{t.emb.half}, the Problem \ref{pr.weak} has one and only one solution $ u(t)\in C(0,T;L^2 (\Omega)) $, if
	\begin{equation}\label{onesol}
	\mathfrak{Re}\left( \Big(\big(\sum_{l=1}^m \tilde{a}_l (x) {\mathfrak D}_l  + \delta a_0\big) v , v \Big)_{L^2 (\Omega)}\right) \geq 0
	\end{equation}
	for all $ v\in L^2(0,T;H^+ (\Omega)) $.
\end{corollary}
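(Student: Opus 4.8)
The plan is to deduce the statement directly from Theorem \ref{t.exist}, which already guarantees the existence of a solution $u(t) \in C(0,T;L^2(\Omega))$; hence only uniqueness remains. Exploiting the linearity of Problem \ref{pr.weak} in its data, I would take two solutions $u_1, u_2$ sharing the same $f$ and $u_0$ and consider their difference $w = u_1 - u_2 \in L^2(0,T;H^+(\Omega))$. It satisfies $w(0) = 0$ and the homogeneous identity
\[
(w(t), v)_+ + \Big(\big(\sum_{l=1}^m \tilde{a}_l (x) {\mathfrak D}_l + \delta a_0\big) w(t), v\Big)_{L^2(\Omega)} + <\frac{\partial w}{\partial t}(t), v> = 0
\]
for all $v \in H^+(\Omega)$ and almost every $t$. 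The goal is then to show that $w \equiv 0$.

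The main idea is the usual energy argument. I would test the homogeneous identity with the time-dependent element $v = w(t)$, which is admissible since $w(t) \in H^+(\Omega)$ for almost all $t$, and then take real parts. This turns the first summand into $\|w(t)\|_+^2 \geq 0$, the second into a quantity that is nonnegative by hypothesis (\ref{onesol}) (applied pointwise in $t$), and the third into $\tfrac{1}{2}\frac{d}{dt}\|w(t)\|_{L^2(\Omega)}^2$ by exactly the computation in (\ref{eq.apri.12}). Collecting these contributions gives
\[
\tfrac{1}{2}\frac{d}{dt}\|w(t)\|_{L^2(\Omega)}^2 = -\|w(t)\|_+^2 - \mathfrak{Re}\Big(\big(\sum_{l=1}^m \tilde{a}_l (x) {\mathfrak D}_l + \delta a_0\big) w(t), w(t)\Big)_{L^2(\Omega)} \leq 0,
\]
so that $t \mapsto \|w(t)\|_{L^2(\Omega)}^2$ is nonincreasing. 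Integrating from $0$ to an arbitrary $s \in (0,T]$ and invoking $w(0) = 0$ yields $\|w(s)\|_{L^2(\Omega)}^2 \leq 0$, whence $w(s) = 0$ for every $s$ and $u_1 = u_2$.

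The delicate point, and the step I expect to require the most care, is the legitimacy of inserting the time-dependent test function $v = w(t)$ together with the differentiation-of-the-norm identity $\frac{d}{dt}\|w\|_{L^2(\Omega)}^2 = 2\,\mathfrak{Re}\,<\partial w/\partial t, w>$. As in the existence proof, the homogeneous identity forces $\partial w/\partial t \in L^2(0,T;H^-(\Omega))$, so that $w \in C(0,T;L^2(\Omega))$ by (\ref{eq_nepr}); within the Gelfand triple $H^+(\Omega) \hookrightarrow L^2(\Omega) \hookrightarrow H^-(\Omega)$ this is precisely the setting in which the Lions--Magenes lemma secures the identity in the distributional sense on $(0,T)$. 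I would justify it rigorously either by citing that lemma or by first establishing the energy inequality for the finite-dimensional Galerkin approximations, where $v = w_k(t)$ is an honest combination of the basis vectors $h_j$ and the norm identity holds classically, and then passing to the limit.
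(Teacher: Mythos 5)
Your proposal is correct and follows essentially the same route as the paper: form the difference $w$ of two solutions, test the homogeneous identity with $w$ itself, take real parts, use $\mathfrak{Re}\left(\frac{\partial w}{\partial t}, w\right)_{L^2(\Omega)} = \tfrac{1}{2}\frac{d}{dt}\|w\|^2_{L^2(\Omega)}$ together with hypothesis (\ref{onesol}) to conclude that $\|w(t)\|^2_{L^2(\Omega)}$ is nonincreasing and hence vanishes since $w(0)=0$. Your extra care in justifying the time-dependent test function via the Gelfand-triple/Lions--Magenes lemma addresses a point the paper passes over silently, but it does not change the argument.
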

\begin{proof}
	The existence of the solution follows from the Theorem \ref{t.exist}.
	Let us now show, that the solution is unique, if the condition (\ref{onesol}) and the hypothesis of Theorem \ref{t.exist} are fulfilled. Indeed, let $ v\in L^2(0,T;H^+ (\Omega)) $ is another solution of Problem \ref{pr.weak}. Denote by $ w = u-v $. Then $ w $ satisfies conditions of Problem \ref{pr.weak} and
	\[
	(w,v)_+  + \Big(\big(\sum_{l=1}^m \tilde{a}_l (x) {\mathfrak D}_l  + \delta a_0\big) w , v \Big)_{L^2 (\Omega)} + \frac{d}{d t}\left( w,v\right)_{L^2(\Omega)} = 0
	\]
	for all $v \in H^+ (\Omega)$, and $w(0) = 0$. It follows from (\ref{eq.SL.w13}), that
	\[
	\frac{\partial w}{\partial t} + L w = 0.
	\]
	Multiplying scalar it by $ w $ we have
	\[
	\|w\|^2_+  + \Big(\big(\sum_{l=1}^m \tilde{a}_l (x) {\mathfrak D}_l  + \delta a_0\big) w , w \Big)_{L^2 (\Omega)} + \left(\frac{\partial w}{\partial t}, w\right)_{L^2(\Omega)} = 0.
	\]
	As the $ \|w(t)\|^2_+ $ is a real-valued function, therefore
	%\[
	%\mathfrak{Im}\left( \left( \frac{\partial w}{\partial t} , w\right)_{L^2(\Omega)} \right) + \mathfrak{Im}\left( \Big(\big(\sum_{l=1}^m \tilde{a}_l (x) {\mathfrak D}_l  + \delta a_0\big) w , w \Big)_{L^2 (\Omega)} \right)  = 0
	%\]
	%and
	\[
	\|w\|^2_+ + \mathfrak{Re}\left( \left( \frac{\partial w}{\partial t}, w\right)_{L^2(\Omega)} \right) + \mathfrak{Re}\left(\Big(\big(\sum_{l=1}^m \tilde{a}_l (x) {\mathfrak D}_l  + \delta a_0\big) w , w \Big)_{L^2 (\Omega)}\right) = 0.
	\]
	On the other hand,
	\[
	\mathfrak{Re}\left( \left( \frac{\partial w}{\partial t}, w\right)_{L^2(\Omega)} \right) = \frac{1}{2}\frac{d}{d t}\|w\|^2_{L^2(\Omega)}.
	\]
	It follows from (\ref{onesol}), that 
	\[
	2\mathfrak{Re}\left( \left( \frac{\partial w}{\partial t}, w\right)_{L^2(\Omega)} \right) = \frac{d}{d t}\|w\|^2_{L^2(\Omega)}\leq 0
	\]
	and
	\[
	\|w(t)\|^2_{L^2(\Omega)}\leq \|w(0)\|^2_{L^2(\Omega)} = 0,
	\]
	hence $ w(t) = 0 $ for almost all $ t\in[0,T] $, that completes the proof.
\end{proof}

As we already mentioned, the embedding $ H^+(\Omega) $ into $H^{1/2-\varepsilon} (\Omega)$ is rather sharp. Let us show, that the space $ L^2(0,T;H^+ (\Omega)) $ can not be continuously embedded into $ L^2(0,T;H^s (\Omega)) $ for all $ s>1/2 $.

\begin{example}
	Let $\Omega$ be a unit sphere in $\mathbb{C}$, matrix $ {\mathfrak A} (x) $ has a form
	\[
	{\mathfrak A} (x)=\left(a_{ij}(x)\right)_{\substack{i=1,2\\ j=1,2}} = 
	\begin{pmatrix}
	1 & \sqrt{-1}\\
	-\sqrt{-1} & 1
	\end{pmatrix},
	\]
	$S=\emptyset$, $a_l=0$ for $ l=0,1,\dots,m $, and $b_1= b_0 = 1$.
	Then the series 
	\[
	u _\varepsilon(z,t) = \sum_{k=0}^\infty \frac{z^kt^{k/2}}{T^{(k+1)/2}(k+1)^{\varepsilon/2}}, 
	\]
	$\varepsilon >0$, converges in $ L^2(0,T;H^+ (\Omega)) $ and 
	\[
	\|u_\varepsilon\|^2 _{L^2(0,T;H^+ (\Omega))}  = 
	\|u_\varepsilon\|^2 _{L^2(0,T;L^2 ({\mathbb S}))} = 2\pi \sum_{k=0}^\infty \frac{1}{(k+1)^{1+\varepsilon}}.
	\] 
	According to \cite[Lemma 1.4]{Shl22}
	%\[
	%\Big\|u_\varepsilon \Big\|^2 _{H^s ({\mathbb B})} \geqslant \pi 
	%\sum_{k=0}^\infty \frac{t^kk^{2s-1}}{T^{(k+1)}(k+1)^{\varepsilon}}, \, 0<s\leq 1,
	%\]
	%then
	\[
	\Big\|u_\varepsilon \Big\|^2 _{L^2(0,T;H^s ({\mathbb B}))} \geqslant \pi 
	\sum_{k=0}^\infty \frac{k^{2s-1}}{(k+1)^{1+\varepsilon}}, \, 0<s\leq 1.
	\]
	It means, that for each $s  \in (1/2,1)$ there exist $\varepsilon >0$ such that $u_\varepsilon \not \in L^2(0,T;H^s ({\mathbb B}))$. Hence, the space $L^2(0,T;H^+ ({\mathbb B}))$
	can not be continuously embedded into $ L^2(0,T;H^s (\mathbb B)) $ for all $ s>1/2 $.

\end{example}


\begin{thebibliography}{}
	%
	% and use \bibitem to create references. Consult the Instructions
	% for authors for reference list style.
	%
	%\bibitem{RefJ}
	% Format for Journal Reference
	%Author, Article title, Journal, Volume, page numbers (year)
	% Format for books
	%\bibitem{RefB}
	%Author, Book title, page numbers. Publisher, place (year)
	% etc
	
	\bibitem{LadSoUr67}
	Ladyzhenskaya,~O. A., Solonnikov, V. A., and Ural'tseva, N. N.: Linear and Quasilinear Equations of Parabolic Type, 648. Nauka, Moscow (1967)
	
	\bibitem{LiMa72}
	Lions, J. L., Magenes, E.:
	Non-Homogeneous Boundary Value Problems and Applications, 360.
	Springer-Verlag, Berlin et al. (1972)
	
	\bibitem{Mikh76}
	Mikhailov, V. P.:
	Partial Differential Equations, 392.
	Nauka, Moscow (1976)
	
	\bibitem{Temam79}
	Temam, R.:
	Navier-Stokes equations: Theory and Numerical Analysis, 408.
	Studies in Math. and its Appl. \textbf{2} (1979).
	
	\bibitem{ADN59}
	Agmon, S., Douglis, A., Nirenberg, L.:
	Estimates near the boundary for solutions of elliptic partial differential equations
	satisfying general boundary conditions, Part 1.  Comm. Pure Appl. Math. \textbf{12}, 623--727 (1959)
	
	\bibitem{Ca59} 
	Campanato, S.: Sui problemi al contorno per sistemi 
	di equazioni differenziale lineari del tipo dell'elasticit\'a, Ann. della Scuola 
	Norm. Superiore, Cl. di Sci, Ser. III, \textbf{13:2}, 223--258 (1959)
	
	\bibitem{Ca60} 
	Campanato, S.: Propriet\'a  
	di taluni spazi di distribuzioni e loro applicazione, Ann. della Scuola 
	Norm. Superiore, Cl. di Sci, Ser. III, \textbf{14:4}, 363--376 (1960)
	
	\bibitem{ShlPeich15} 
	Peicheva A.S.,   Shlapunov A.A.:  
	On the completeness of root functions of Sturm-Liouville problems 
	for the Lame system in weighted spaces. 
	Z. Angew. Math. Mech., V. 95, N. 11, 1202--1214 (2015)
	
	\bibitem{PolkShla13}
	Polkovnikov, A.N., Shlapunov, A.A.:
	On spectral properties of a non-coercive mixed problem associated with the
	$\overline \partial$-operator,
	Journal of Siberian Federal University, Math. and Phys., N. 6(2), 247-261 (2013)
	
	\bibitem{PolkShla15}
	Polkovnikov, A., Shapunov, A.:
	On non-coercive mixed problems for parameter-dependent elliptic operators,
	Math. Commun., {\bf 15}, 1-20 (2015)
	
	\bibitem{PolkShla17}
	Polkovnikov, A,  Shlapunov, A.A.:
	Construction of Carleman formulas by using mixed problems with parameter-dependent boundary conditions, Siberian Math. J., {\bf 58}:4(344), 676-686 (2017)
	
	\bibitem{ShlTark12}
	Shlapunov, A.,  Tarkhanov, N.:
	On completeness of root functions of Sturm-Liouville problems with discontinuous
	boundary  operators, J. of Differential Equations, {\bf 10}, 3305--3337 (2013)
	
	\bibitem{Kohn79}
	Kohn, J.~J.:
	Subellipticity of the $\overline \partial\,$-Neumann problem on pseudoconvex
	domains: sufficient conditions,
	Acta Math. \textbf{142}, No.~1-2, 79--12 (1979)
	
	\bibitem{ShlTark15}
	Shlapunov, A., Tarkhanov, N.:
	Sturm-Liouville Problems in Weighted Spaces
	over Domains with Non-Smooth Edges. I, Mat. Trudy, V.18, N. 1, 118--189 (2015)
	
	\bibitem{Sche60}
	Schechter, M.:
	{Negative norms and boundary problems},
	Ann. Math. \textbf{72}, No.~3, 581--593 (1960)
	
	
	\bibitem{Gaevsky}
	Gaevsky, H., Greger, K., Zaharias, K.:
	Nonlinear Operator Equations and Operator Differential Equations, 335.
	Mir, Moscow (1978)
	
	\bibitem{Fillipov}
	Filippov, A.F.:
	Differential equations with discontinuous right-hand side, 225.
	Nauka, Moscow (1985)
	
	\bibitem{Groe19}
	Gronwall, T. H.: 
	Note on the derivatives with respect to a parameter of the solutions of a
	system of differential equations, Ann. of Math., 20:2, 292--296 (1919)
	
	\bibitem{MPF91}
	Mitrinovi\'c, D. S., Pe$\check{c}$ari\'c, J.E, Fink, A.M.:
	Inequalities Involving Functions and Their Integrals and Derivatives, 587.
	Mathematics and its Applications (East European Series), V. 53, 
	Kluwer Ac. Publ., Dordrecht, 
	Springer-Science + Business Media B.V., Dordrecht (1991)
	
	\bibitem{Shl22}
	Shlapunov, A.A.:
	Spectral decomposition of Green's integrals and existence of $W^{s,2}\,$-solutions of matrix factorizations of the Laplace operator in a ball, Rend. Sem. Mat. Univ. Padova, 96, 237-256 (1996)
	
\end{thebibliography}
\end{document}